\newtheorem{Theorem}{Theorem}[section]
\newtheorem{Definition}[Theorem]{Definition}
\newtheorem{Proposition}[Theorem]{Proposition}
\newtheorem{Lemma}[Theorem]{Lemma}
\newtheorem{Corollary}[Theorem]{Corollary}
\theoremstyle{remark}
\newtheorem{Example}[Theorem]{Example}
\newtheorem{Remark}[Theorem]{Remark}
\def\ovr{\overline}
\def\Om{\Omega}
\def\al{\alpha}
\def\ve{\varepsilon}
\def\sbs{\subset}
\def\up{\stackrel}
\def\ra{\rightarrow}
\def\co{\operatorname{co}}
\def\ext{\operatorname{ext}}
\def\lim{\operatorname{lim}}
\def\ic{\operatorname{int}}
\def\re{\mathbf {Re\,}}
\def\be{\begin{enumerate}}
\def\ee{\end{enumerate}}
\def\bT{\begin{Theorem}}
\def\eT{\end{Theorem}}
\def\bP{\begin{Proposition}}
\def\eP{\end{Proposition}}
\def\bD{\begin{Definition}}
\def\eD{\end{Definition}}
\def\bE{\begin{Example}}
\def\eE{\end{Example}}
\def\bL{\begin{Lemma}}
\def\eL{\end{Lemma}}
\def\bC{\begin{Corollary}}
\def\eC{\end{Corollary}}
\def\I{{\mathcal I}}
\def\B{{\mathcal B}}
\def\J{{\mathcal J}}
\def\K{{\mathcal K}}
\def\R{{\mathbb R}}
\keywords{Metric spaces; Lower envelopes; Vector optimization;
Steepest descent method}
\date{\today}
\begin{document}
\title{Lower Envelopes and Steepest Descent Directions in Vector
Optimization}
\author{N\.{i}hat G\"{o}khan G\"{o}\u{g}\"{u}\c{s}}
\address[N\.{i}hat G\"{o}khan G\"{o}\u{g}\"{u}\c{s}]{Sabanc\i{} University,
Orhanli Tuzla, 34956, Istanbul, Turkey}
\email{nggogus@sabanciuniv.edu}

\thanks{The author is supported by the Scientific and Technological Research Council
of Turkey related to a grant project called "Jensen measures in
complex analysis and c-regularity" with project number 110T223.}

\subjclass[2000]{ Primary: 46N10, 46B99}

\begin{abstract} The purpose of the paper is to give a complete
characterization of the continuity of lower envelopes in the
infinite dimensional spaces in terms of the notion of
$c$-regularity. As an application we introduce a variational
unconstrained vector optimization problem for smooth functions and
characterize when the variational steepest descent directions are
continuous in terms of the generating sets which are considered.

\end{abstract}  \maketitle

\section {Introduction}
The lower envelopes  of certain functions appear quite naturally in
functional analysis, optimization, in the theory of uniform algebras
and in potential theory. We investigate the continuity properties of
lower envelopes in the abstract setting of infinite dimensional
spaces. One can start with any set in a topological space $A$ and
assign to each point $a$ in this set \emph{a fiber} $\J_a$, that is,
a class of elements from the dual space $X^*$ of some vector space
$X$. Then one can construct a new function on $A$ by taking lower
envelopes which is obtained by considering the infimum over all
numbers of the form $\re x^*(x)$, where $x\in X$ is fixed, and $x^*$
changes over the fiber $\J_a$ for any $a\in A$. To visualize things,
as a model example one can think of $x$ as a function which we
minimize subject to some condition $\J_a$, where $a$ runs in some
sample space $A$. Then we wish to find conditions which guarantee
continuity of these optimal values at a point $a\in A$. We consider
fibers as multifunctions. As it happens the continuity of lower
envelopes is a consequence of such geometric properties of these
multifunctions as upper and lower semicontinuity (Theorem
\ref{Th:Mf}). Roughly speaking, lower envelopes are continuous if
and only if any limit point of fibers can be obtained as a limit of
all fibers from every direction. In Section \ref{Sec:Creg} we call
such sets $c$-{\it{regular}}. This notion was introduced first in
\cite{Gogus05} and \cite{GogusThesis} in the content of
pluripotential theory for domains in ${\mathbb{C}}^n$.

\par As an application of this characterization we look at the
problem of unconstrained $K$-minimizers. In multi-objective
optimization, a special case of the problem of unconstrained
$K$-minimizers, one considers a continuously differentiable function
\[F:\R ^n\to \R^m.\] The problem is to find a minimizer of $F$ on
$\R^n$ subject to the convex cone $\R^m_+$ of positive octant in
$\R^m$. To explain further let
\[\R^m_+=\{(x_1,\ldots ,x_m)\in\R^m:x_j\geq 0,\,\,j=1,\ldots ,m\}\]
and we want to find a point $\al\in\R^n$ such that there exists no
other point $\beta\in\R^n$ with $F(\beta)\not =F(\al)$ and
$F(\al)-F(\beta)\in \R^m_+$. Recently, this problem was extended
using the Cauchy method (or known as steepest descent method),
Newton method and gradient projection method to the problem of
finding unconstrained $K$-minimizers in \cite{DrummondGranaIusem04},
\cite{FliegeGranaDrummond09}, \cite{FliegeSvaiter00}, and
\cite{DrummondSvaiter05}. To find the $K$-minimizers one needs to
look at the $K$-critical points of $F$. As in the scalar case $m=1$
every $K$-minimizer is $K$-critical but not vice versa. The method
of $K$-steepest descent in \cite{DrummondSvaiter05} provides an
efficient algorithm to approximate the $K$-critical points. A
central tool of these investigations is the so called gauge function
$G(x)$ for $K$. It allows one to measure how good the descent
direction is.

\par In this paper we describe this problem in the abstract setting of
infinite dimensional spaces taking into account a family of the
minimization sets and a family of objective values. The number of
sets and objective values we consider is not necessarily finite. We
start with a family of closed convex pointed cones $K_a$, $a\in A$,
in a normed linear space $X$, where $A$ is a metric space. Let
$\J_a$ be a generating set for $K_a$. On our way we consider the
variational gauge function $G(a,x)$ for $K_a$ defined on $A\times
X$.  Using our characterization of $c$-regular sets from section
\ref{Sec:LowerEnvelopes} we completely characterize in section
\ref{Sec:KMinimizers} the continuity of $G(a,x)$ in terms of the
generating sets $\J_a$ under very reasonable conditions on $\J_a$.
We note that when $E=\R^n$, and $A$ and $S$ are one-point sets, we
are in the same consideration as in the work
\cite{DrummondSvaiter05} and in this case the continuity of the
gauge function is trivial.

\par Let $E=\R^n$ and consider a family \[F_s:\R^n\to X,\,\,\,\,\,\, s\in S,\] of continuously differentiable
functions indexed by some topological space $S$. So for each $a\in
A$, $s\in S$, and $\al\in \R^n$ one can find the $K_a$-steepest
descent direction $\nu [a,s,\al]$ for $F_s$ at $\al$ as described in
\cite{DrummondSvaiter05}. Since $K_a$-steepest descent directions
are used to approximate the $K_a$-critical values for the functions
$F_s$ it is important to characterize when the functions $\nu
[a,s,\al]$ are continuous. We prove in section \ref{Sec:KMinimizers}
that if the differential maps of $F_s$ are continuous in $s$ and if
the index set $A$ is $\J$-c-regular, then the $K_a$-steepest descent
direction $\nu [a,s,\al]$ for $F_s$ at $\al$  are continuous.


\section{$C$-regularity}\label{Sec:Creg}
\par Let $A$ be a metric space and $X$ be a norm space. To each element $a\in A$ we associate
a set $\J_a\sbs X^*$. We will use the notation $x_j^*\up{* }{\ra}
x^*$ when $x_j^*$ is a sequence in $X^*$ which converge weak-$*$ to
$x^*$. Given any point $a\in A$ let $S_a$ be the class of all
sequences $s=\{a_j\}$ in $A$ which converge to $a$. If $s\in S_a$,
then the set $\J^s_a$ consists of all elements $x^*\in X^*$ so that
a sequence of elements $x_j^*\in \J_{a_j}$ converges weak-$*$ to
$x^*$. We denote by $\J^{ws}_a$ the set of all weak-$*$ cluster
points of $\J_{a_j}$ consisting of all elements $x^*\in X^*$ so that
there exist a subsequence $\{a_{j_k}\}$ of $s$ and elements $x
_{j_k}^*\in \J_{a_{j_k}}$ which converge weak-$*$ to $x^*$. Let
$\J^{cs}_a := \ovr{\co } \J^{ws}_a$, the closed convex hull of
$\J^{ws}_a$. We let
\[\J^1_a:=\cup _s\J^s_a, \,\,\J^{2s}_a:=\cap
_s\J^{s}_a, \,\,\J^{2ws}_a:=\cap _s\J^{ws}_a,\,\,\,\,\,
\textnormal{and} \,\,\,\,\,\J^{2cs}_a:=\cap _s\J^{cs}_a,\] where $s$
runs through all sequences in $S_a$. We will always assume that the
following properties hold:

$\J 0$: For each $a\in A$ the set $\J_a$ is a nonempty convex
weak-$*$ compact subset of $X^*$;

$\J 1$: For any convergent sequence $\{a_j\}$ in $A$  if $x^*_j\in
\J_{a_j}$, then there exists a subsequence $x^*_{j_k}$ that weak-$*$
converges;

$\J 2$: The set $\J^{2s}_a$ is nonempty for any $a\in A$;

$\J 3$: The set $\J^{cs}_a$ is weak-$*$ compact for any $a\in A$ and
$s\in S_a$.

By Alaoglu's theorem the conditions $\J 0$, $\J 1$, and $\J 3$ are
satisfied for example when $\cup_{a\in A}\J_a$ is bounded in $X^*$.

\begin{Remark}\label{Rem:Jcs} By principle of uniform boundedness
(see \cite[III. 14, Theorem 14.1]{Conway90}) if the condition $\J 3$
is satisfied, then for every $a\in A$ there exists a constant
$c_a>0$ so that $||x^*||\leq c_a$ for all $x^* \in \J ^{cs}_a$ .
\end{Remark}

\par Note that the sets $\J^s_a$ and $\J^{2s}_a$ are convex.
Since $\J^{ws}_a$ is the set of weak-$*$ cluster points of
$\J_{a_j}$'s, it is weak-$*$ closed. Hence $\J^{ws}_a$ is weak-$*$
compact. It follows from these observations that the sets
$\J^{2ws}_a$ and $\J^{2cs}_a$ are convex and compact for every $a\in
A$. It is not hard to see that for any $s\in S_a$,
\[\J^{2s}_a\sbs\J^s_a\sbs \J^{ws}_a,\,\,\,\,\, \textnormal{ and }\,\,\,\,\,\J^{2s}_a\sbs \J^{2ws}_a\sbs
\J^{ws}_a\sbs \J^1_a.\] A point $a \in A $ is said to be {\it
 $\J$-$c$-regular } if $\J^1_a=\J^{2s}_a$. $A$ is said to be
$\J$-$c$-regular if every point $x\in A$ is $\J$-$c$-regular.

\begin{Remark}\label{Rem:Creg} The classes $\J^s_a$ are independent of the sequence $s$ if and
only if $a$ is $c$-regular. In this case $\J^1_a=\J^{2s}_a=\J_a$ is
convex and compact.
\end{Remark}

\par A point $a \in A$ is said to be {\it $c_1$-regular }
({\it $c_2$-regular}, resp.) if the classes $\J^{ws}_a$
($\J^{cs}_a$, resp.) are independent of the sequence $s\in S_a$. We
will first show that all different types of "{\it{c-regular}}"
definitions above are equivalent. We state this problem in terms of
functional analysis and we prove this equivalence in this general
format.

If $\mathcal{L}$ is any subset of a linear space $\mathcal{X}$, the
closed convex hull of $\mathcal{L}$ is denoted by $\ovr{\co }\,
\mathcal{L} $. For a compact convex subset of a normed linear space
$\mathcal{X}$, we denote by $\ext \, \mathcal{K}$ the set of all
extreme points of $\mathcal{K}$.

\bT \cite[V. 7 Theorem 7.8]{Conway90} \label{c2} Let $\mathcal{K}$
be a compact convex subset of a locally convex linear space
$\mathcal{X}$, and $\mathcal{L}$ be any subset of $\mathcal{K}$. If
$\ovr{\co }\, \mathcal{L} = \mathcal{K}$, then $\ext \, \mathcal{K}
\sbs \ovr{\mathcal{L}}$. \eT

 Let $K=\{K_j\} $ be a sequence of sets in a locally convex linear space $\mathcal{X}$. We
define:

\par $l(K) = \{x: \, x = \lim x_j \, \textnormal{ for } \, x_j \in
K_j \}$, all limit points of $K_j$;

\par  $w(K) = \{x :\, x = \lim x _{j_m} \, \textnormal{ for some
 subsequence } \, x_{j_m} \in K_{j_m} \}$, all
cluster points of $K_j$;

\par  $cw(K) = \ovr{\co } \, w(K)$, closed convex hull of $w(K)$.

The following result was proved in \cite{Gogus05} and
\cite{GogusThesis}. \bT \cite{GogusThesis} \label{Th:5} Let
$K=\{K_j\} $ be a sequence of compact convex sets in a locally
convex linear space $\mathcal{X}$ so that $cw(K)$ is also compact.
Suppose for any subsequence $L=\{L_j\}$ of $\{K_j\}$, $cw(L)=cw(K)$.
Then $l(L)=w(L)=cw(K)$ for all subsequences $L$ of $K$. \eT

The above theorem allows us to show the equivalence of different
$c$-regularities defined above.

\bC\label{ay3} A point $a \in A$ is $c$-regular if and only if it is
$c_1$-regular if and only if it is $c_2$-regular. \eC
\begin{proof}
As noted before the classes $\J^s_a$ are independent of the sequence
$s$ if and only if $a$ is $c$-regular. It's easy to see that
$$c\text{-regular}\Rightarrow c_1\text{-regular}\Rightarrow c_2\text{-regular}$$
using the definitions.

To show that $c_2$-regularity implies $c$-regularity, let
$s=\{a_j\}$ be any sequence converging to $a$. Put $K =\{
\J_{a_j}\}$ in Theorem \ref{Th:5}. Then $l(K)=\J^s_a$,
$w(K)=\J_a^{ws}$ and $cw(K)=\J^{cs}_a$. Recall that $cw(K)$ is
compact in $X^*$. By Theorem \ref{Th:5}, $\J^s_a=\J^{cs}_a$ for any
sequence $s\in S_a$ and thus $\J^s_a$ is independent of $s$.
Therefore $a$ is $c$-regular.
\end{proof}

\section{Lower envelopes}\label{Sec:LowerEnvelopes}
\par Given any element $x$ in $X$, we
define its $\J$-envelope $\I [x,\J]:A\to \mathbb{R}$ as
$$\I [x;\J](a):=\inf \left\{ {\re} x^*(x) :x^* \in
\J_a \right\}$$ for every $a\in A$. Let us write $\I x(a)$ instead
of $\I [x;\J](a)$ for simplicity if no confusion arise. In this
section we will prove that $c$-regular points are exactly those
where the $\J $-envelopes are continuous. Let

\[\I^{\sharp}x(a):=\inf \left\{ {\re} x^*(x) :x^* \in \J^{\sharp}_a
\right\},\] where $\sharp$ is one of $1,\, s,\, ws,\, cs,\, {2s}, {2ws}
\text{ or } {2cs}$ for any $x\in X$, $a\in A$ and $s\in S_a$. We
will leave the details of the following observation.
\begin{Remark} \label{r2} $\I^ {ws}x(a)=\I ^{cs}x(a)$ for any $x\in X$, $a\in A$ and
$s\in S_a$.
\end{Remark}

The following result will be of great use.

\bP\label{Prop:I1} Let $x\in X$ and $a \in A$. Then there exist
sequences $s_0$, $t_0\in S_a$ so that\be
\item $(\I x)_*(a )=\I^1x(a)= \inf _{s\in S_a}\I^sx(a)=
\I^{t_0}x(a)$;

\item $(\I x)^*(a)=\sup_{s\in S_a}\I^{cs}x (a)= \I^{s_0}x
(a)\leq\I^{2s}x (a)$. \ee
 \eP

\begin{proof}
 Take $a \in A$ and suppose $\I x (a_j)<\I^1x(a)-\varepsilon $ for
 some sequence of points $a_j\in A$ converging to $a$ and some number
 $\varepsilon >0$. We can find
 a sequence $x^* _j \in \J_{a_j}$ so that for all $j$,
 $${\re} x^* _j(x)<\I^1x(a)-\varepsilon  .$$
 There exists a subsequence $x^* _{j_k}$ such that $x^* _{j_k} \up{* }{\ra} x^* $
   for some $x^* \in \J^1_{a }$. Hence, letting $j_k\to \infty $,
   $${\re} x^* (x) \leq \I^1x(a)- \varepsilon .$$ On the other hand
$\I^1x(a) \leq {\re} x^*(x) $, which gives that
   $${\re} x^*(x) \leq \I^1x(a)- \varepsilon \leq {\re} x^*(x)- \varepsilon ,$$
a contradiction. Thus $(\I x )_*(a ) \geq \I^1x(a)$.

Suppose $\I^1x(a)+\varepsilon <(\I x )_*(a )$ for some point $a \in
A$ and some number $\varepsilon >0$. We may find an element $x^* $
of $\J^1_a$ so that $${\re} x^*(x) \leq \I^1x(a)+\varepsilon .$$
There exists a sequence $a_j\in A $ and $x^* _j \in \J_{a_j}$ such
that $a_j\to a $ and $x^* _j$ converges weak-$*$ to $x^*$. Since $\I
x (a_j)\leq {\re} x^*_j(x)$ for all $j$,
\begin{eqnarray}
(\I x )_*(a )  \leq  \lim _j {\re} x^* _j(x)= {\re} x^* (x)  \leq
\nonumber \I^1x(a)+\varepsilon <(\I x)_*(a ).
\end{eqnarray}
This contradiction proves that $\I^1x(a)=(\I x)_*(a)\leq \I^sx(a)$
for any $s\in S_a$. Take points $a_j\in A$ and elements $x^*_j\in
\J_{a_j}$ so that $t_0=\{a_j\}\in S_a$, $\lim\I x(a_j)=(\I x)_*(a)$
and $\I x(a_j)={\re} x^* _j(x)$ for every $j$. Passing to a
subsequence we may assume that $x^* _j$ converges weak-$*$ to an
element $x^* \in \J^{t_0}_a$. Then
\[\I^{t_0}x(a)\leq (\I x)_*(a)={\re} x^* (x)\leq
\I^{t_0}x(a).\] Hence $(\I x)_*(a)=\I^{t_0}x(a)$ and this finishes
the proof of the first part.

For the second part, if $s=\{b_j\}\in S_a$, then we can find
elements $y^* _j\in \J _{b_j}$ so that $\I x(b_j)={\re} y^* _j(x)$
for each $j$. A subsequence $\{y^* _{j_k}\}$ converges weak-$*$ to
some $y^* \in \J^{ws}_a$. Then
\[(\I x )^*(a)\geq \lim {\re} y^* _{j_k}(x)={\re} y^*(x) \geq \I^{ws}x (a).\]
Thus, $(\I x)^*(a)\geq \sup_{s\in S_a}\I^{cs}x (a)$.

On the other hand there exist points $a_j\in A$ converging to $a$ so
that $\lim \I x(a_j)=(\I x)^*(a )$. Let $t=\{a_j\}$. There exist
$y^*\in \J^{wt}_a$ and $y^*_{j_k}\in \J_{a_{j_k}}$ that weak-$*$
converge to $y^*$ so that
\[\I^{wt}x(a)= y^*(x)=\lim y^*_{j_k}(x)\geq \lim \I
x(a_{j_k})=(\I x)^*(a).\] Hence we get the first equality in (2).

\par To prove the second equality note that for every $j$ there
exists an element $x^*_j\in \J _{a_j}$ so that $\I x(a_j)=x^*_j(x)$.
There exists a subsequence $x^*_{j_k}$ that weak-$*$ converges to
some $x^* \in \J ^{s_0}_a$, where we set $s_0=\{a_{j_k}\}$. Then
\[(\I x)^*(a)=\lim \I x(a_{j_k})=x^*(x)\geq \I ^{s_0}x(a).\]
Now given $\varepsilon >0$, $\I ^{s_0}x(a)+\varepsilon \geq {\re}
z^*(x)$ for some $z^*\in \J^{s_0}_a$. There exist $z^*_k\in
\J_{a_{j_k}}$ that weak-$*$ converge to $z^*$.
\[\I^{s_0}x (a)+\varepsilon \geq \lim {\re} z^*_k (x)\geq \lim \I x(a_{j_k})
=(\I x)^*(a ).\] Hence $\I^{wt}x(a)=\I^{s_0}x(a)=(\I x)^*(a)=\sup
_{s\in S_a}\I ^{ws}x(a)$. The result follows from Remark \ref{r2}.
\end{proof}

Proposition \ref{Prop:I1} provides the following characterization of
continuity of lower envelopes in terms of $c$-regularity.

\bC \label{Cor:Jcreg} Let $a\in A$ be a point. We have the following
statements:

a. $\J^{2cs}_a=\J_a$ if and only if $\I x(a)=(\I x)^*(a)=\I
^{2cs}x(a)$ for any $x\in X$.

b. $\J_a=\J ^1_a$ if and only if $\I x(a)=(\I x)_*(a)=\I ^1x(a)$ for
any $x\in X$.

c. $\J^{2s}_a=\J^1_a$ if and only if $\I x$ is continuous at $a$ for
any $x \in X$. \eC

\begin{proof}
In general we have $\I x(a)\leq \sup_{s\in S_a}\I^{cs}x (a)$ since
the constant sequence $s=\{a\}\in S_a$. If $\J_a=\cap _{s\in
S_a}\J^{cs}_a=\J^{2cs}_a$, we have the equality $\I x(a)= \sup_{s\in
S_a}\I^{cs}x (a)=(\I x)^*(a)$. Conversely, suppose $\I x(a)=(\I
x)^*(a)$ for every $x\in X$. Suppose that there exists $x^*\in \J
_a\backslash \J^{cs}_a$ for some $s\in S_a$. There exist $x\in X$
and a number $r>0$ so that
\[\I x(a)=(\I x)^*(a)\leq x^*(x)<\I ^{cs}x(a)-r\leq \sup_{s\in S_a}\I^{cs}x (a)
=(\I x)^*(a),\] where Proposition \ref{Prop:I1} is used in the last
equality. The contradiction shows that $\J _a\sbs \J^{cs}_a$ for
every $s\in S_a$. Thus $\J_a=\J^{2cs}_a$. This proves part a. The
statement in part b. concerning $(\I x)_*(a)$ and $\J ^1_a$ is
proved similarly.

\par To prove the last statement about continuity we note that
$\J^{2s}_a=\J^1_a$ implies $\J^{2cs}_a=\J^1_a=\J_a$. From a. and b.
$\I x$ is continuous at $a$ for any $x \in X$. To prove the converse
suppose that $(\I x)_*(a)= \I x(a)= (\I x)^*(a) $ for every $x\in
X$. Then $\J^{2cs}_a=\J^1_a=\J_a$, $\J^1_a$ is closed and convex and
hence $\J^{cs}_a= \J^1_a$ for every $s\in S_a$. This means that $a$
is $c_2$-regular and hence $c$-regular by Corollary \ref{ay3}.
\end{proof}

\par Let $F$ and $G$ be topological spaces and let $p:\,F\times G\to
F$ be the projection. A set $\K\sbs F\times G$ is a
{\it{multifunction}} on $F$ if $p(\K)=F$ and for each $x\in F$ the
fiber $\K_x=\{y\in G: \, (x,\, y)\in \K\}$ is compact.

\par A multifunction $\K$ is {\it{upper semicontinuous}} at $x\in F$ if for
every neighborhood $V$ of $\K_x$ in $F\times G$ there is a
neighborhood $W$ of $x$ in $F$ such that $\K_y\sbs V$ when $y\in W$.
A multifunction $\K$ is {\it{lower semicontinuous}} at $x\in F$ if
for every $(x,y)\in \K_x$ and for every neighborhood $V$ of $(x,y)$
in $F\times G$ there is a neighborhood $W$ of $x$ in $F$ such that
$\K_y\cap V\ne\emptyset$ when $y\in W$. The following is a slightly
modified version of Theorem 3.2 proved in \cite{Gogus05}.

\bT \label{Th:Mf} Let $\J\sbs A\times X^*$ be a multifunction on $A$
with fibers $ \J_a$ at $a\in A$ so that conditions $\J 0$-$\J 3$ are
satisfied. Let $a_0\in A$. \be
\item The lower envelope $\I x$ is upper semicontinuous at $a_0$ for
all $x \in X$ if and only if $\J$ is lower semicontinuous at $a_0$.

\item The lower envelope $\I x$ is lower semicontinuous at $a_0$ for
all $x \in X$ if and only if $\J$ is upper semicontinuous at $a_0$.
\ee \eT

\begin{proof}

(1)  Suppose $\J$ is lower semicontinuous at $a_0$. Choose $x^* \in
\J_{a_0}$ such that $${\re} x^*(x) <\I x(a_0)+\frac{\varepsilon
}{2}$$ and let
$$V=\left\{x^* +y^* :\,|y^*(x) |
<\frac{\varepsilon }{2} ,\, \,\, y^* \in X^* \right\}.$$ There
exists a neighborhood $W$ of $a_0$ such that if $a\in W$ there
exists $x^* +y^* _a \in V\cap \J_a$. Then
$$\I x(a)\leq {\re} x^*(x)+ {\re} y^* _a(x)<{\re} x^*(x) +\frac{\varepsilon }{2}<\I x(a_0)+\varepsilon .$$
Hence $\I x$ is upper semicontinuous at $a_0$.

Now suppose $\J$ is not lower semicontinuous at $a_0 \in A$. Then we
can find an element $x^* \in \J_{a_0}$, a neighborhood $V$ of $x^* $
and a sequence $a_k \in A$ such that $a_k \to a_0$ and $\J_{a_k}
\cap V=\emptyset $. Thus $x^*\in \J_{a_0}\backslash \J
^{2ws}_{a_0}$. By Corollary \ref{Cor:Jcreg} there exists $x\in X$ so
that $\I x$ is not upper semicontinuous at $a_0$.

 (2) Suppose $\J$ is upper semicontinuous at $a_0$. Let $x\in X$ and
$$V=\J_{a_0}+\left\{y^* :\, | y^*(x)| <\varepsilon
\right\}.$$ There exists a neighborhood $W$ of $a_0$ such that if
$a\in W$, $\J_a \sbs V$. Hence for all $x^* \in \J_a$ there exists
$y^* \in \J_{a_0}$ such that

$${\re} y^*(x) -\varepsilon <{\re} x^*(x)
<{\re} y^*(x) +\varepsilon .$$ Taking infimum over $x^* \in \J_a$,
we get
$$\I x(a_0)-\varepsilon \leq \I x(a)$$ for all $a\in
W$. Thus $\I x$ is lower semicontinuous at $a_0$.

\par Suppose $\J$ is not upper semicontinuous at some point $a_0$.
There exist a sequence $\{a_j\} \sbs A$ converging to $a$, a
neighborhood $V$ of $0$ in $X^*$ and elements $x^* _j \in
\J_{a_j}\backslash (\J_{a_0}+V)$. There exist a subsequence $\{x^*
_{j_k}\}$ of $\{x^* _j\}$ that converges weak-$*$ to an element $x^*
\in X^*$. Then $x^*\in \J^1_{a_0}$ but $x^* \not \in \J_{a_0}$. By
Corollary \ref{Cor:Jcreg} there exists $x\in X$ so that $\I x$ is
not lower semicontinuous at $a_0$.
\end{proof}

If $\B ^*$ is an open ball of $X^*$ and $X$ is separable, then it is
known that $\B ^*$ is metrizable. In this case Corollary
\ref{Cor:Jcreg} can be improved in the following way.

\bC \label{Cor:usc2} Let $\J\sbs A\times X^*$ be a multifunction on
$A$ as in Theorem \ref{Th:Mf}. Suppose that $X$ is separable. Let
$a\in A$. Then $\I x$ is upper semicontinuous at $a$ for any $x \in
X$ if and only if $\J_a=\J ^{2ws}_a$. \eC

\begin{proof} Note that $\J^{2ws}_a=\J_a$ implies $\J^{2cs}_a=\J_a$ so
sufficiency follows from Corollary \ref{Cor:Jcreg}. To prove
necessity let $\mu \in \J_a$. By Remark \ref{Rem:Jcs} we may assume
that $\J ^{cs}_a$ is contained in some open ball $\B^*$ in $X^*$.
Let $B_k$ be the open ball of radius $1/k$ around $\mu$ in $\B ^*$.
Given a sequence $s=\{a_j\}\in S_a$. Since $\J$ is lower
semicontinuous at $a$ by Theorem \ref{Th:Mf}, for any $k$ there
exists $j_k\geq 1$ and $\mu _{j_k}\in \J _{a_{j}}$ for all $j\geq
j_k$. The sequence $\{\mu_{j_k}\}$ converges weak-$*$ to $\mu$ and
$\mu \in \J ^{ws}_a$. Hence $\J_a\sbs \J ^{2ws}_a$. The other
inclusion always holds. This finishes the proof.
\end{proof}

\par Now let us consider the function $\I [\cdot ,\cdot]:A\times X\to
\mathbb{R}$ defined by
\[\I [a,x]:=\I x (a)\] for every $a\in A$ and $x\in X$. It is an easy
fact that the function $\I [a,\cdot]$ is continuous in the second
variable $x$ when $a\in A$ is fixed. In fact, one can show that it
is Lipschitz continuous. Let us give the proof of this fact.

\begin{Proposition} The function $\I [a,\cdot]$ is Lipschitz continuous in the second
variable $x$ when $a\in A$ is fixed.
\end{Proposition}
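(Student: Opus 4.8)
The plan is to show that for a fixed $a\in A$, the map $x\mapsto \I[a,x]=\I x(a)$ satisfies a Lipschitz estimate with constant equal to a uniform bound on the norms of the functionals in $\J_a$. The key structural fact I would exploit is that $\I x(a)$ is an infimum of the affine (in fact linear) functions $x\mapsto \re x^*(x)$ as $x^*$ ranges over $\J_a$. An infimum of a family of functions that are each Lipschitz with a common constant is itself Lipschitz with that same constant, so the whole estimate reduces to bounding $|\re x^*(x)-\re x^*(y)|$ uniformly in $x^*\in\J_a$.

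First I would invoke condition $\J 0$, which guarantees that $\J_a$ is a nonempty weak-$*$ compact subset of $X^*$; by Remark \ref{Rem:Jcs} (or directly by the principle of uniform boundedness applied to the weak-$*$ compact set $\J_a$), there is a finite constant $c_a>0$ with $\|x^*\|\leq c_a$ for every $x^*\in\J_a$. Then for any $x,y\in X$ and any single $x^*\in\J_a$ I would estimate
\[
\re x^*(x)-\re x^*(y)=\re x^*(x-y)\leq |x^*(x-y)|\leq \|x^*\|\,\|x-y\|\leq c_a\,\|x-y\|,
\]
so that $\re x^*(x)\leq \re x^*(y)+c_a\|x-y\|$.

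Next I would pass this pointwise bound through the infimum. Taking the infimum over $x^*\in\J_a$ on the left and using that $\re x^*(y)+c_a\|x-y\|\geq \I y(a)+c_a\|x-y\|$ is a lower bound for each $x^*$ on the right, I obtain $\I x(a)\leq \I y(a)+c_a\|x-y\|$. Since $x$ and $y$ play symmetric roles, interchanging them yields the reverse inequality, and together these give
\[
|\I[a,x]-\I[a,y]|=|\I x(a)-\I y(a)|\leq c_a\,\|x-y\|,
\]
which is exactly Lipschitz continuity of $\I[a,\cdot]$ with constant $c_a$.

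I do not anticipate a genuine obstacle here, since the argument is a standard "infimum of uniformly Lipschitz functions" computation; the only point requiring a little care is justifying the uniform norm bound $c_a$, but this is handed to us by $\J 0$ together with uniform boundedness exactly as recorded in Remark \ref{Rem:Jcs}. One minor technical nicety is ensuring the infima defining $\I x(a)$ and $\I y(a)$ are finite so that the inequalities are meaningful; this again follows from the norm bound, which forces $|\re x^*(x)|\leq c_a\|x\|$ and hence $\I x(a)\geq -c_a\|x\|>-\infty$ for every $x$.
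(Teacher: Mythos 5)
Your proof is correct, and it takes a somewhat different route from the paper's. The paper first establishes the superadditivity inequality $\I (x+y)(a)\geq \I x(a)+\I y(a)$ by choosing a minimizer $\mu\in\J_a$ for $\I (x+y)(a)$ --- a step that uses the attainment of the infimum on the weak-$*$ compact set $\J_a$ --- and then deduces $\I [a,x]-\I [a,y]\leq -\I [a,y-x]\leq c\|x-y\|$ with $c=\sup\{\|x^*\|:x^*\in\J_a\}$. You instead run the standard ``infimum of a uniformly Lipschitz family'' computation: the pointwise bound $\re x^*(x)\leq \re x^*(y)+c_a\|x-y\|$ for each $x^*\in\J_a$, passed through the infimum. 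Your version never needs the infimum to be attained, so it uses only boundedness of $\J_a$ (strictly weaker than the compactness in $\J 0$) and is marginally more general, while the paper's superadditivity detour buys nothing extra here since that inequality is not reused. Two small repairs: the sentence where you take infima is garbled --- the inequality $\re x^*(y)+c_a\|x-y\|\geq \I y(a)+c_a\|x-y\|$ points the wrong way; what your displayed estimate actually gives is $\I x(a)\leq \re x^*(x)\leq \re x^*(y)+c_a\|x-y\|$ for every $x^*\in\J_a$, and taking the infimum of the right-hand side over $x^*$ yields $\I x(a)\leq \I y(a)+c_a\|x-y\|$. Also, Remark \ref{Rem:Jcs} is stated for $\J^{cs}_a$ under condition $\J 3$, so your parenthetical justification --- uniform boundedness applied directly to the weak-$*$ compact set $\J_a$ guaranteed by $\J 0$ --- is the cleaner citation and is exactly what the paper itself invokes (though the Remark does cover $\J_a$, since $\J_a\sbs\J^{cs}_a$).
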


\begin{proof} To see this, let $a\in A$ be fixed and take $x,$ $y\in X.$
Then there exists an element $\mu \in \J_{a}$ so that
\[\I (x+y)(a)=\re \mu(x+y)=\re \mu (x)+\re \mu (y)\geq \I x(a)+\I y(a).\]
From this inequality we have
\[\I [a,x]-\I [a,y]\leq -\I [a,y-x]\leq c\|x-y\|,\] where $c=\sup\{\|x^*\|:x^*\in
\J_a\}$. Note that $c$ is finite due to property $\J 0$. Hence by
symmetry
\[|\I [a,x]-\I [a,y]|\leq c\|x-y\|\] for every $x$, $y\in X.$
This proves the claim that $\I [a,\cdot]$ is Lipschitz continuous.
\end{proof}

\par It is not true in general that if a function $F:U\times
V\to\mathbb{R}$ defined on some set $U\times V$ is separately
continuous, then it is jointly continuous. For a simple example one
may take the function $F(x,y)=\frac{xy}{x^2+y^2}$ when $(x,y)\not
=(0,0)$ and $F(0,0)=0$ defined on $\mathbb{R}^2$. Then $F(x,\cdot)$
is continuous when $x\in\mathbb{R}$ is fixed, $F(\cdot ,y)$ is
continuous when $y\in\mathbb{R}$ is fixed, but $F$ is not continuous
at $(0,0)$. Our next result shows that for our lower envelope
operator $\I[\cdot,\cdot]$ being separately continuous is the same
as being jointly continuous. We will need the following lemma.

\begin{Lemma}\label{Lem:ConvergenceCrossProductDual} Let $X$ be a normed space, $\mu_j\in X^*$ be elements
which weak-$*$ converge to an element $\mu\in X^*$, and $x_j\in X$
be elements which converge to some element $x\in X$. Then the
numbers $\mu_j(x_j)$ converge to $\mu(x)$.
\end{Lemma}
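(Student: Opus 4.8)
The plan is to reduce the joint limit to two separate one-variable limits by inserting a cross term. Writing
\[
\mu_j(x_j)-\mu(x)=\mu_j(x_j-x)+\bigl(\mu_j(x)-\mu(x)\bigr),
\]
I would treat the two summands independently. The second summand $\mu_j(x)-\mu(x)$ tends to $0$ immediately from the hypothesis $\mu_j\up{*}{\ra}\mu$: weak-$*$ convergence means precisely that $\mu_j(y)\to\mu(y)$ for every fixed $y\in X$, and here the argument $y=x$ is held fixed.

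The first summand is where the real work lies. I would estimate it by $|\mu_j(x_j-x)|\leq \|\mu_j\|\,\|x_j-x\|$, and since $x_j\to x$ in norm the factor $\|x_j-x\|\to 0$. Thus the summand vanishes provided the norms $\|\mu_j\|$ remain bounded; without such a bound a product of a $0$-sequence with a possibly unbounded sequence need not converge to $0$. Consequently the crux of the proof is to produce a constant $M$ with $\sup_j\|\mu_j\|\leq M<\infty$, and I expect this to be the main obstacle.

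To secure the uniform bound I would appeal to the principle of uniform boundedness, exactly as in Remark \ref{Rem:Jcs}: for each fixed $y\in X$ the scalar sequence $\{\mu_j(y)\}$ converges (to $\mu(y)$) and is therefore bounded, so the family $\{\mu_j\}$ is pointwise bounded on $X$; Banach--Steinhaus then gives $M:=\sup_j\|\mu_j\|<\infty$. With $M$ in hand the estimate $|\mu_j(x_j-x)|\leq M\,\|x_j-x\|\to 0$ closes the argument: both summands tend to $0$, whence $\mu_j(x_j)\to\mu(x)$. Once the boundedness of $\{\mu_j\}$ is established the remaining steps are routine, and the cross-term device is the standard mechanism for combining a weak-$*$ convergent sequence with a norm-convergent one.
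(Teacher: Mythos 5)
Correct, and essentially the paper's own argument: you use the same splitting of $\mu_j(x_j)-\mu(x)$ into $\mu_j(x_j-x)$ plus $\mu_j(x)-\mu(x)$, kill the second term by the definition of weak-$*$ convergence, and kill the first by a uniform bound on $\|\mu_j\|$ together with $\|x_j-x\|\to 0$. You are in fact more careful than the paper, which merely asserts the existence of the constant $c$ with $\|\mu_j\|\leq c$; your explicit Banach--Steinhaus justification is the right one, though note that it (like the paper's tacit bound) requires $X$ to be complete, a hypothesis the lemma's ``normed space'' phrasing strictly omits.
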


\begin{proof} Note that we have
\[|\mu_j(x_j)-\mu(x)|\leq |\mu_j(x_j)-\mu_j(x)|+|\mu_j(x)-\mu(x)|\leq c\|x_j-x\|+|\mu_j(x)-\mu(x)|\]
for some constant $c>0$ for every $j$. By assumption of the lemma it
is clear that the right hand side converges to zero as $j\to\infty$.
\end{proof}

\begin{Proposition}\label{Prop:JointContinuity} Let $\J:A\times X\to \mathbb{R}$ be a
multifunction satisfying the properties $\J 0$-$\J 3$.
\begin{itemize}
\item[i.] $\I x$ is upper semicontinuous on $A$ for every $x\in X$ if and only if
$\I [\cdot,\cdot]$ is upper semicontinuous on $A\times X$.

\item[ii.] $\I x$ is lower semicontinuous on $A$ for every $x\in X$ if and only if
$\I [\cdot,\cdot]$ is lower semicontinuous on $A\times X$.

\item[iii.] $\I x$ is continuous on $A$ for every $x\in X$ if and only if
$\I [\cdot,\cdot]$ is continuous on $A\times X$.
\end{itemize}
\end{Proposition}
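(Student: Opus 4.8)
The plan is to establish (i) and (ii) separately and then deduce (iii), since a real-valued function is continuous precisely when it is simultaneously upper and lower semicontinuous. In both (i) and (ii) one implication is immediate: if $\I[\cdot,\cdot]$ is upper (respectively lower) semicontinuous on $A\times X$, then for each fixed $x_0\in X$ the function $\I[\cdot,x_0]$ is the composition of the continuous embedding $a\mapsto(a,x_0)$ of $A$ into $A\times X$ with $\I[\cdot,\cdot]$, and such a composition inherits the semicontinuity; equivalently, $\I x_0$ is the restriction of $\I[\cdot,\cdot]$ to the slice $A\times\{x_0\}$. The substance of the proposition is therefore the reverse implication, upgrading separate semicontinuity to joint semicontinuity, and for this I would lean on the Lipschitz continuity of $\I[a,\cdot]$ in the second variable already proved above, whose constant is $c_a=\sup\{\|x^*\|:x^*\in\J_a\}$.

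The main step, and the one I expect to be the crux, is to show that these Lipschitz constants are locally bounded: for each $a_0\in A$ there exist a neighborhood $U$ of $a_0$ and a constant $M>0$ with $c_a\le M$ for all $a\in U$. I would argue by contradiction. If this failed, there would be a sequence $a_j\to a_0$ with $c_{a_j}\to\infty$, and since each $\J_{a_j}$ is norm bounded (property $\J 0$) we could select $x^*_j\in\J_{a_j}$ with $\|x^*_j\|\to\infty$. By $\J 1$ some subsequence of $\{x^*_j\}$ converges weak-$*$; but a weak-$*$ convergent sequence is norm bounded by the principle of uniform boundedness, exactly as invoked in Remark \ref{Rem:Jcs}, contradicting $\|x^*_j\|\to\infty$. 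This is the only place property $\J 1$ is used in an essential way.

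With local boundedness secured, the remaining argument is a routine $\varepsilon$-$\delta$ estimate. Fix $(a_0,x_0)\in A\times X$ and choose $U$ and $M$ as above. For every $a\in U$ and $x\in X$ the Lipschitz bound gives
\[|\I[a,x]-\I[a,x_0]|\le c_a\|x-x_0\|\le M\|x-x_0\|.\]
For part (i), given $\varepsilon>0$ I would use upper semicontinuity of $\I[\cdot,x_0]$ at $a_0$ to find a neighborhood $U'\sbs U$ of $a_0$ with $\I[a,x_0]<\I[a_0,x_0]+\varepsilon/2$ for $a\in U'$, and choose $\delta>0$ with $M\delta<\varepsilon/2$; then $\I[a,x]<\I[a_0,x_0]+\varepsilon$ whenever $a\in U'$ and $\|x-x_0\|<\delta$, which is joint upper semicontinuity at $(a_0,x_0)$. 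Part (ii) is symmetric, replacing the estimate above by $\I[a,x]\ge\I[a,x_0]-M\|x-x_0\|$ and using lower semicontinuity of $\I[\cdot,x_0]$ at $a_0$ to obtain $\I[a,x]>\I[a_0,x_0]-\varepsilon$ on a neighborhood of the same form. Finally (iii) follows immediately, since $\I x$ is continuous on $A$ for every $x$ if and only if it is both upper and lower semicontinuous for every $x$, which by (i) and (ii) is equivalent to $\I[\cdot,\cdot]$ being both upper and lower semicontinuous on $A\times X$, i.e.\ continuous.
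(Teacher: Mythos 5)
Your proof is correct, but it takes a genuinely different route from the paper's. The paper proves both nontrivial implications by contradiction inside the weak-$*$ sequential framework it has already built: it extracts a sequence $(a_j,x_j)\to(a,x)$ witnessing the failure of joint semicontinuity, invokes Theorem \ref{Th:Mf} (semicontinuity of $\I x$ for all $x$ is equivalent to semicontinuity of the multifunction $\J$) together with Corollary \ref{Cor:Jcreg} (so that, e.g., $\J_a=\J^1_a$ in part ii.), uses $\J 1$ to pass to a weak-$*$ convergent subsequence, and finishes with Lemma \ref{Lem:ConvergenceCrossProductDual} to evaluate $\lim_j \mu_j(x_j)=\mu(x)$. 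You never touch the multifunction machinery: you upgrade the paper's Lipschitz estimate $|\I[a,x]-\I[a,y]|\le c_a\|x-y\|$ to a locally uniform one by showing $a\mapsto c_a$ is locally bounded (via $\J 1$ plus uniform boundedness), and then run the classical argument that locally uniform equicontinuity in one variable plus separate semicontinuity in the other yields joint semicontinuity. Your route buys something the paper's does not: it is local and pointwise, since joint upper (or lower) semicontinuity at $(a_0,x_0)$ only requires semicontinuity of the single slice $\I[\cdot,x_0]$ at $a_0$ rather than the hypothesis for every $x\in X$, and it isolates exactly where $\J 1$ and the norm bounds enter. The paper's route buys internal economy, reusing Theorem \ref{Th:Mf} and Corollary \ref{Cor:Jcreg} and keeping the result tied to the $c$-regularity theme. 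One caveat: your step ``a weak-$*$ convergent sequence is norm bounded'' rests on the principle of uniform boundedness and hence, strictly speaking, on completeness of $X$, which the paper does not assume; but the paper makes the identical implicit assumption in Remark \ref{Rem:Jcs} and in the unexplained constant $c$ of Lemma \ref{Lem:ConvergenceCrossProductDual}, so your proof is no worse off on this point.
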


\begin{proof} iii. follows from i. and ii. One direction in these statements
is trivial. We will only prove necessity. Let us start proving i.
Suppose $\I x$ is upper semicontinuous on $A$ for every $x\in X$.
Suppose on the contrary that $\I [\cdot,\cdot]$ is not  upper
semicontinuous at some point $(a,x)$ in $A\times X$. There exist
$(a_j,x_j)\in A\times X$ which converge to $(a,x)$, a number $\ve
>0$ and an element $\mu\in \J_a$ so that
\[\re\mu(x)+\ve\leq\I[a,x]+2\ve\leq \I[a_j,x_j]\] for every $j$.
Since by Theorem \ref{Th:Mf} $\J$ is lower semicontinuous at $a$,
there exists a subsequence $\{a_{j_k}\}$ and measures $\nu_{j_k}\in
\J_{j_k}$ so that $|\mu (x)-\nu_{j_k}(x)|<1/k$ for every $k\geq 1$.
By property $\J 1$ we may assume without loss of generality  by
passing to another subsequence if necessary that $\nu_{j_k}$
weak-$*$ converges to some measure $\nu\in X^*$. Then we have
\[\re \mu (x)+\ve\leq \re \nu_{j_k}(x)\] for every $k\geq 1$.  As
$k\to\infty$ we get
\[\re \mu (x)+\ve\leq \re \nu (x)= \re \mu (x)\] which is clearly a
contradiction. This proves (the necessity of) part i.

\par Now let us prove part ii. Suppose now that $\I [\cdot,\cdot]$ is not  lower
semicontinuous at some point $(a,x)$ in $A\times X$. There exist
$(a_j,x_j)\in A\times X$ which converge to $(a,x)$, a number $\ve
>0$ and elements $\mu_j\in \J_{a_j}$ so that
\[\re\mu_j(x_j)+\ve\leq\I[a_j,x_j]+\ve\leq \I[a,x].\] for every $j$. A
subsequence of $\{\mu_j\}$ which we denote as the same sequence
converges weak-$*$ to $\mu\in \J^1_a$. By Corollary \ref{Cor:Jcreg}
$\J_a=\J^1_a$ and hence $\mu$ belongs to $\J_a$. By Lemma
\ref{Lem:ConvergenceCrossProductDual} $\mu_j(x_j)$ converge to
$\mu(x)$ and hence
\[\re\mu(x)+\ve\leq\I[a,x]\leq \re\mu (x),\]
a contradiction. This finishes the proof of part ii. and the proof
of the proposition.
\end{proof}


\section{Variational unconstrained $K$-minimizers and gauge functions for convex
cones}\label{Sec:KMinimizers}

\par Let $X$ be a normed linear space and $K$ a convex closed pointed
cone in $X$. Then $K$ induces a partial order $\preceq$ on $X$ which
is defined by the relation
\[x,\,y\in X,\, x\preceq y\,\,\,\,\textnormal{if and only if} \,\,\,\,y-x\in K.\]
We will also consider the following order $\prec$ induced by the
interior $\ic K$ of $K$ in $X$:
\[x,\,y\in X,\, x\prec y\,\,\,\,\textnormal{if and only if} \,\,\,\,y-x\in \ic K.\]

\par Let $E$ be a normed space and $\Om$ be a subset of $E$. Often one
is interested in minimizing in the sense of this order a function
$F:\Om\to X$, that is, find a point $\alpha\in\Om$ such that there
exists no other $\beta\in\Om$ with $F(\beta)\preceq F(\alpha)$ and
$F(\beta)\not =F(\alpha)$. This is the problem of finding an
\emph{unconstrained} $K$-\emph{minimizer} of $F$ on $\Om$. Although
in the original definitions $E$ is considered to be a finite
dimensional space, our discussions in this section is a
straightforward extension to infinite dimensional setting.

\par We define the \emph{positive polar cone} $K^+$ of $K$ as the set
\[K^+=\{x^*\in X^*:x^*(x)\geq 0 \,\,\,\,\textnormal{for every}\,\,\,\,x\in K\}.\]
Let $C\sbs K^+$ be a weak-$*$ compact set which generates $K^+$ in
the following sense:
\[K^+=\ovr{\co } \cup_{t\geq 0}tC.\]
A \emph{gauge function} for $K$ is then defined as the function
$G:X\to \mathbb{R}$ by
\[G(x)=\sup_{x^*\in C}x^*(x).\]
It is clear that $G$ is a continuous sublinear functional. Gauge
function is essential for defining the $K$-\emph{steepest descent
direction} when the interior of $K$ is nonempty and one considers
the problem of finding a $K$-\emph{minimizer} of a continuously
differentiable function $F$ (see \cite{DrummondGranaIusem04},
\cite{FliegeGranaDrummond09}, \cite{FliegeSvaiter00},
\cite{DrummondSvaiter05}). We follow in this section the exposition
in \cite{DrummondSvaiter05}) where the case $E=\Om =\R^n$ was
considered.

\par In classical optimization (single-objective) $E=\R^n$, $X=\mathbb{R}$, $K=\mathbb{R}_+$,
the set of nonnegative real numbers and one can take $C=\{1\}$. For
the multi-objective optimization $E=\R^n$, $X=\mathbb{R}^m$, $m\geq
2$, $K$ and $K^+$ are the positive orthant of $\mathbb{R}^m$ and we
may take $C$ as the canonical basis of $\mathbb{R}^m$. For an
arbitrary closed pointed convex cone $K$ in $X$, the weak-$*$
closure in $X^*$ of the set $C=\{x^*\in K^+:\|x^*\|=1\}$ can be
used.

\par Given a point $\al\in \Om$ we define $f_{\al}:E\to\R$ as
\[f_{\al}(\nu):=G(DF(\al)\nu)=\sup\{x^*(DF(\al)\nu):x^*\in C\}\]
for any $\nu\in E$, where $DF(\al):E\to X$ is the differential of
$F$ at the point $\al$. Following \cite{DrummondSvaiter05} we say
that a vector $\nu\in E$ is a \emph{$K$-descent direction} at a
point $\al\in\Om$ if $f_{\al}(\nu)<0$. It is a well-known fact (see
\cite{Luc89}) that if $\nu\in E$ is a descent direction at a point
$\al\in\Om$, then there exists a number $t_0>0$ so that
\[F(\al+t\nu)\prec F(\al)\,\,\,\,\,\,\textnormal{for all}\,\,\,\,t\in (0,t_0).\]
We say that a point $\al\in\Om$ is \emph{$K$-critical} if there is
no $K$-descent direction at $\al$. That is to say, $\al$ is
$K$-critical if $f_{\al}(\nu)\geq 0$ for every $\nu\in E$. Note that
$\al$ is $K$-critical if and only if
\[-\ic K\cap \textnormal{Image}(DF(\al))=\emptyset.\]
The \emph{$K$-steepest descent direction} $\nu[\al]$ for $F$ at
$\al\in\Om$ is the solution of
\[\min \,\, f_{\al}(\nu)+(1/2)\|\nu\|^2,\,\,\,\,\,\,\nu\in E.\] The
optimal value of this problem will be denoted by $m[\al]$. Note that
the function $\nu\mapsto f_{\al}(\nu)$ is real-valued closed convex,
therefore, $\nu[\al]$ and $m[\al]$ are uniquely determined.
Moreover, the maps
\[(\al,\nu)\mapsto f_{\al}(\nu),\,\,\,\,\,\,\al\mapsto \nu[\al],\,\,\,\,\,\,\textnormal{and}\,\,\al\mapsto m[\al]\]
are continuous (see \cite[Lemma 3.3]{DrummondSvaiter05}).

\par We will now consider a variational problem of unconstrained
minimizers related to convex closed cones. Let $A$ be a metric
space. For every $a\in A$ let $K_a$ be a convex closed pointed cone
in $X$. Let $\J_a\sbs K^+_a$ be a set which generates $K^+_a$. Now
we consider the function $G:A\times X\to \mathbb{R}$ defined by
\[G(a,x)=\sup\{x^*(x):x^*\in\J_a\}.\]
Clearly the function $G(a,x)$ is continuous in the variable $x$ when
the first variable $a\in A$ is fixed. We are interested in
determining exact conditions which guarantee the continuity of the
variational Gauge function $G(a,x)$. With the notation of section
\ref{Sec:LowerEnvelopes} we have the relation
\[\I[-x,\J](a)=-G(a,x)\] for every $x\in X$ and $a\in A$.

\par Assuming certain very reasonable properties $\J 0$-$\J 3$ on the
sets $\J _a$ we get necessary and sufficient conditions in terms of
$\J_a$ for the function $G(a,x)$ to be upper or lower semicontinuous
or just to be continuous using Corollary \ref{Cor:Jcreg} and
Proposition \ref{Prop:JointContinuity}. These properties are
satisfied for example when the set $\cup_{a\in A}\J_a$ is bounded in
$X^*$. As a consequence we obtain the following result.

\bT\label{Th:GaugeFunctionContinuous} Let $A$ be a metric space, $X$
be a normed linear space, $K_a$ be a convex closed pointed cone in
$X$ and let $\J_a\sbs K^+_a$ be a set which generates $K^+_a$ for
every $a\in A$. Suppose that the properties $\J 0$-$\J 3$ are
satisfied. We have the following statements:

a. $G(a,x)$ is lower semicontinuous on $A\times X$ if and only if
$\J^{2cs}_a=\J_a$ for every $a\in A$;

b. $G(a,x)$ is upper semicontinuous on $A\times X$ if and only if
$\J_a=\J ^1_a$ for every $a\in A$;

c. $G(a,x)$ is continuous on $A\times X$ if and only if $A$ is
$\J$-$c$-regular. \eT


\par Let us go one step further. Let $F_s:E\to X$, $s\in S$, be a family of
continuously differentiable functions indexed by a topological space
$S$. When can one find a continuous selection of steepest
$K_a$-descent directions? We would like to establish some conditions
in terms of the generating sets $\J_a$ which guarantee the
continuity of the functions
\[(a,s,\al)\mapsto \nu [a,s,\al],\,\,\,\,\,\,\textnormal{and}\,\,(a,s,\al)\mapsto m [a,s,\al].\]
Here we denote the steepest $K_a$-descent direction for $F_s$ by
$\nu [a,s,\al]$ and the corresponding optimal value by $m
[a,s,\al]$. If we want to be more precise and want to emphasize the
involvement of the functions $F_s$ in these notations we will write
$\nu [a,s,\al;F_s]$ or $m [a,s,\al;F_s]$ respectively. The following
result which follows from Theorem \ref{Th:GaugeFunctionContinuous}
answers the question.

\bT Let $A$, $X$, $K_a$,  and $\J_a$ be as in Theorem
\ref{Th:GaugeFunctionContinuous}. Let $F_s:\R^n\to X$, $s\in S$, be
a family of continuously differentiable functions so that the
mapping
\[s\mapsto DF_s(\al)\,\,\,\,\,\,\textnormal{from}\,\,\,\,\,\,S\to
L(\R^n,X)\] is continuous for every $\al\in \R^n$. If $A$ is
$\J$-$c$-regular, then the mappings
\[(a,s,\al)\mapsto \nu [a,s,\al],\,\,\,\,\,\,\textnormal{and}\,\,(a,s,\al)\mapsto m
[a,s,\al]\] are continuous. \eT

\begin{proof} Note that by our assumption the map
\[(s,\al,\nu)\mapsto DF_s(\al)\nu\,\,\,\,\,\,\textnormal{from}\,\,\,\,\,\,S\times
\R^n\times \R^n\to \R^n\] is continuous. Since $A$ is
$\J$-$c$-regular, the gauge function $G(a,x)$ is continuous on
$A\times X$ by Theorem \ref{Th:GaugeFunctionContinuous}. Hence the
map
\[\kappa [a,s,\al ,\nu]:= G[a,DF_s(\al)\nu]+(1/2)\|\nu\|^2\,\,\,\,\,\,\textnormal{from}\,\,\,\,\,\,A\times S\times \R^n\times
\R^n\to\mathbb{R}\] is continuous. Let $(a_0,s_0,\al_0)$ be a point
in $A\times S\times \R^n$, $\nu_0=\nu [a_0,s_0,\al_0]$, and let
$m_0=m [a_0,s_0,\al_0]$. Our proof relies on the following observations:\\

\noindent \textit{Claim: Given  $\ve
>0$, there is an open
neighborhood $U$ of $(a_0,s_0,\al_0)$ in $A\times S\times \R^n$ so
that
\[G[a,DF_s(\al)\nu]+(1/2)\|\nu\|^2>G[a,DF_{s}(\al)\nu_0]+(1/2)\|\nu_0\|^2\]
for every $\nu\in \R^n$ with $\|\nu-\nu_0\|= \ve$ and for every
$(a,s,\al)\in U$}.
\smallskip

\noindent \textit{Proof of Claim:  } Let us assume the contrary. So
there exist $\ve>0$, vectors $\nu_k\in \R^n$ with
$\|\nu_k-\nu_0\|=\ve$ and points $(a_k,s_k,\al_k)$ which converge to
$(a_0,s_0,\al_0)$ so that
\[G[a_k,DF_{s_k}(\al_k)\nu_k]+(1/2)\|\nu_k\|^2\leq G[a_k,DF_{s_k}(\al_k)\nu_0]+(1/2)\|\nu_0\|^2\]
for every $k$. Since the set $\{\nu\in\R^n:\|\nu-\nu_0\|=\ve\}$ is
compact we may assume without loss of generality (refining
$\{\nu_k\}$ if necessary) that the vectors $\nu_k$ converge to a
vector $\nu '\in\R^n$. From the continuity of $G$ we have
\[\lim _kG[a_k,DF_{s_k}(\al_k)\nu_k]=G[a_0,DF_{s_0}(\al_0)\nu '].\]
Hence
\[G[a_0,DF_{s_0}(\al_0)\nu ']+(1/2)\|\nu '\|^2\leq m_0.\] Since
$m_0$ is the minimum value of the objective function $\kappa
[a_0,s_0,\al_0,\nu]$ and $\nu_0$  is the unique vector in $\R^n$
which minimizes this objective function, $\nu '=\nu_0$, which is a
contradiction since $\|\nu '-\nu_0\|=\ve$. Thus we have proved the
claim.

\par To finish the proof of the theorem let $\ve >0$ be given, and
let $U$ be the open set found above in the claim. Take any point
$(a,s,\al)\in U$. Let $\nu '=\nu [a,s,\al]$ and $k(\nu):=\kappa
[a,s,\al,\nu]$ for any vector $\nu\in\R^n$. We will show that
$\|\nu-\nu '\|<\ve $. Suppose to argue by contradiction that
$\|\nu-\nu '\|\geq\ve $. We can find a vector $\eta\in\R^n$ and a
number $0\leq t<1$ so that
\[\|\eta\|=\ve,\,\,\,\,\,\,  \textnormal{and} \,\,\,\,\,\,
\nu_0 +\eta =t\nu _0+(1-t)\nu '.\] Using the inequality proved in
the claim we have
\[k(\nu _0)<k(\nu_0+\eta)\leq tk(\nu _0)+(1-t)k(\nu ').\]
Thus we obtain $k(\nu _0)<k(\nu ')$ which is clearly a contradiction
to the fact that $\nu '$ is the minimizing vector of the function
$k(\nu)$ in $\R^n$. Therefore $\|\nu-\nu '\|<\ve $. The proof is
finished.
\end{proof}




\end{document}